\definecolor{webgreen}{rgb}{0,.5,0}
\definecolor{webbrown}{rgb}{.6,0,0}
\newcommand{\seqnum}[1]{\href{http://oeis.org/#1}{\underline{#1}}}
\begin{document}

\begin{center}
\vskip 0.5cm
\end{center}

\theoremstyle{plain}
\newtheorem{theorem}{Theorem}
\newtheorem{corollary}[theorem]{Corollary}
\newtheorem{lemma}[theorem]{Lemma}
\newtheorem{proposition}[theorem]{Proposition}

\theoremstyle{definition}
\newtheorem{definition}[theorem]{Definition}
\newtheorem{example}[theorem]{Example}
\newtheorem{conjecture}[theorem]{Conjecture}

\theoremstyle{remark}
\newtheorem{remark}[theorem]{Remark}

\begin{center}
\vskip 0.7cm{\LARGE\bf 
Upper Bounds for Prime Gaps \\
\vskip .1in
Related to Firoozbakht's Conjecture
}
\vskip 0.7cm
\large
Alexei Kourbatov\\
www.JavaScripter.net/math\\
15127 NE 24th St., \#578\\
Redmond, WA 98052 \\
USA\\
\href{mailto:akourbatov@gmail.com}{\tt akourbatov@gmail.com}
\end{center}

\vskip .2 in

\begin{abstract}\noindent
We study two kinds of conjectural bounds for the prime gap after the $k$th prime~$p_k$: 
(A) $p_{k+1} < (p_k)^{1+1/k}$ and
(B) $p_{k+1}-p_k < \log^2 p_k - \log p_k - b$ for $k>9$.
The upper bound (A) is equivalent to Firoozbakht's conjecture. 
We prove that (A) implies (B) with $b=1$; on the other hand, (B) with $b=1.17$ implies (A). 
We also give other sufficient conditions for (A) that have the form 
(B) with $b\to1$ as $k\to\infty$.
\end{abstract}

\section{Introduction}
In 1982 Firoozbakht proposed the following conjecture \cite[p.\,185]{ribenboim}:

\medskip\noindent
{\bf Firoozbakht's Conjecture.}
If $p_k$ is the $k$th prime, the sequence $(p_k^{1/k})_{k\in{\mathbb N}}$ is decreasing.

\smallskip\noindent
Equivalently, for all $k\ge1$, the prime $p_{k+1}$ is bounded by the inequality 
\begin{equation}\label{ineqf}
p_{k+1} < (p_k)^{1+1/k}.
\end{equation}
Several authors \cite{rivera,sinha2010,zwsun2013,zwsun2012} have observed that
\begin{itemize}
 \item Firoozbakht's conjecture (\ref{ineqf}) 
       implies {\it Cram\'er's conjecture} $p_{k+1}-p_k=O(\log^2{\negthinspace}p_k)$ \cite{cramer}.
 \item If conjecture (\ref{ineqf}) is true and $k$ is large, then
 \begin{equation}\label{ineqln}
 p_{k+1}-p_k  ~<~ \log^2 p_k - \log p_k.
 \end{equation}
\end{itemize}
(Sun \cite{zwsun2013,zwsun2012} gives a variant of (\ref{ineqln}) with a larger right-hand side,
$\log^2 p_k - \log p_k + 1$.)
 
In Section \ref{sec2} we prove that (\ref{ineqf}) implies a sharper bound than (\ref{ineqln}):
\begin{equation}\label{in2a}
p_{k+1}-p_k  ~<~ \log^2 p_k - \log p_k - b  
\quad\mbox { for all } k>9, 
\end{equation}
with $b=1$.
If the exact value of $k=\pi(p_k)$ is not available, then a violation of (\ref{ineqln}) 
or (\ref{in2a}) might be used to {\it disprove} Firoozbakht's conjecture (\ref{ineqf}). 
However, given a pair of primes $p_k$, $p_{k+1}$, 
the validity of (\ref{ineqln}) alone is not enough for the verification of (\ref{ineqf}). 
We discuss this in more detail in Section \ref{sec3}; see also \cite{kourbatov2015v}.
In Section \ref{sec4} we prove that (\ref{in2a}) with $b=1.17$ implies (\ref{ineqf});
we also give other sufficient conditions for (\ref{ineqf}).
Probabilistic considerations \cite[OEIS \seqnum{A235402}]{cramer,kourbatov2014,kourbatov2015v}
suggest that bounds (\ref{ineqf}), (\ref{ineqln}), (\ref{in2a}) hold for {\it almost all maximal gaps} between primes.

\section{A corollary of Firoozbakht's conjecture}\label{sec2}

\begin{theorem}\label{th1}
If conjecture $(\ref{ineqf})$ is true, then 
$$
p_{k+1}-p_k  ~<~ \log^2 p_k - \log p_k - 1  \quad \mbox { for all } k>9. 
$$
\end{theorem}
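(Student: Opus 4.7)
The plan is to feed Firoozbakht's bound into a Taylor expansion and then absorb the result using a sharp form of the prime number theorem. Set $g = p_{k+1} - p_k$ and $L = \log p_k$. From $(\ref{ineqf})$ we have
\[
g \;<\; p_k\bigl(e^{L/k} - 1\bigr) \;=\; \frac{p_kL}{k} + \frac{p_kL^2}{2k^2} + \frac{p_kL^3}{6k^3} + \cdots,
\]
so the task reduces to bounding this series above by $L^2 - L - 1$ for $k > 9$.

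The leading term $p_kL/k$ is controlled by inverting a sharp unconditional lower bound on $\pi(x)$ of the form $\pi(x) \geq (x/\log x)(1 + 1/\log x + 2/\log^2 x + d/\log^3 x)$ with $d > 3$, valid for $x$ beyond some explicit threshold $x_0$ (such bounds are due to Dusart, Axler, and others, refining Rosser--Schoenfeld). Inversion yields $p_k/k \leq L - 1 - 1/L - (d-3)/L^2 + O(1/L^3)$, so
\[
\frac{p_kL}{k} \;\leq\; L^2 - L - 1 - \frac{d-3}{L} + O\!\left(\frac{1}{L^2}\right),
\]
giving a positive gap of size $\sim (d-3)/L$ below $L^2 - L - 1$. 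The remaining Taylor terms sum to $O(L^3/k)$, which is much smaller than $(d-3)/L$ once $k \gtrsim L^4$, automatic for all large primes. This handles every $k$ beyond the explicit index $k_0 = \pi(x_0)$.

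For the finite range $9 < k \leq k_0$, the claim is checked numerically, using existing tabulations of primes and prime gaps (e.g., tables of maximal gaps, which grow empirically like $\log^2 p_k$ and lie well below the target bound throughout this range). The main obstacle is the explicit calibration of constants: one must choose a PNT bound strong enough that the slack $(d-3)/L$ provably dominates the Taylor tail from a manageably small $k_0$ onward, so that the finite numerical verification is feasible. A secondary concern is that the chosen $k_0$ may be large (Dusart/Axler thresholds are in the range $10^{19}$--$10^{20}$), but the computational check only requires scanning maximal gaps, which are tabulated at least that far.
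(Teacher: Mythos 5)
Your strategy is sound and rests on the same core fact as the paper's proof: one needs a lower bound for $\pi(x)$ that is \emph{strictly sharper} than $x/(\log x - 1 - \frac{1}{\log x})$. Your condition $d>3$ is precisely the condition $c>0$ in a bound of the form $\pi(x) > x/(\log x - 1 - \frac{1}{\log x} - \frac{c}{\log^2 x})$; the paper uses Axler's bound (\ref{in5}) with $c=1$ (equivalently $d=4$ in your normalization), and you correctly identified that the weaker Dusart-type bound with only the $2/\log^2 x$ term would not suffice. Where the two arguments diverge is in the elementary packaging. You expand $p_k(e^{L/k}-1)$ as a series and must then show the tail $p_kL^2/(2k^2)+\cdots$ is dominated by the margin $(d-3)/L$, which leaves several $O(\cdot)$ terms to be made explicit --- doable, but this is real remaining work, not a formality. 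The paper sidesteps the series entirely: it keeps Firoozbakht's conjecture in the logarithmic form $k < \log p_k/(\log p_{k+1}-\log p_k)$, chains this with the exact inequalities (\ref{in4}) and (\ref{in5}), and converts back to the gap via the concavity inequality $y/(x+y) < \log(x+y)-\log x$ of (\ref{in9}), producing the clean factor $p_k/(p_k+\log p_k+1)<1$ with no asymptotic error terms at any stage. Two calibration points in your sketch need correcting. First, the thresholds for the relevant $\pi(x)$ bounds are nowhere near $10^{19}$--$10^{20}$: Axler's Corollary 3.6 holds for $x\ge 1772201$, so the finite verification runs only over $9<k<133115$ and is immediate. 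Second, had the threshold really been $10^{20}$, the maximal-gap shortcut would fail, since exhaustive maximal-gap tables currently extend only to about $2^{64}\approx 1.8\times10^{19}$; and in any case reducing the finite check to maximal gaps requires noting explicitly that the right-hand side $\log^2 p_k-\log p_k-1$ is increasing in $p_k$ (the observation made in Remark (i) of Section \ref{sec4}).
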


\begin{proof}
It is easy to check that
\begin{equation}\label{in4}
{x+\log^2 x \over \log x-1-{1\over\log x}} ~<~ {x\over\log x-1-{1\over\log x}-{1\over\log^2 x}}
\ \ \mbox{ for }x\ge285967. 
\end{equation}
Denote by $\pi(x)$ the prime-counting function. From Axler \cite[Corollary 3.6]{axler} we have
\begin{equation}\label{in5}
{x\over\log x-1-{1\over\log x}-{1\over\log^2 x}} ~<~ \pi(x)
\quad\mbox{ for }x\ge1772201.
\end{equation}
Taking the log of both sides of (\ref{ineqf}) we find that Firoozbakht's conjecture (\ref{ineqf}) is equivalent to
\begin{equation}\label{in6}
k ~<~ {\log p_k \over \log p_{k+1}-\log p_k}. 
\end{equation}
Let $k\ge133115$. Then $p_k\ge1772201$.
By setting $x=p_k$ in (\ref{in4}) and (\ref{in5}), we see that 
inequalities (\ref{in4}), (\ref{in5}), (\ref{in6}) form a chain. 
Therefore, if Firoozbakht's conjecture is true, then
\begin{equation}\label{in7}
{p_k+\log^2 p_k \over \log p_k - 1 -{1\over\log p_k}} ~<~ {\log p_k \over \log p_{k+1}-\log p_k} 
\quad\mbox{ for }p_k\ge1772201.
\end{equation}
Cross-multiplying, we get
\begin{equation}\label{in8}
(\log p_{k+1}-\log p_k)(p_k+\log^2 p_k) ~<~ \log^2 p_k - \log p_k - 1.
\end{equation}
We have
\begin{equation}\label{in9}
{y \over x+y} ~<~ \log(x+y)-\log x \qquad\mbox{ for every } x,y>0. 
\end{equation}
Setting $x=p_k$ and $y=p_{k+1}-p_k$, we can replace the left-hand side of (\ref{in8}) 
by a smaller quantity $(p_{k+1}-p_k)(p_k+\log^2 p_k)/p_{k+1}$ to obtain the inequality
$$ 
{(p_{k+1}-p_k)(p_k+\log^2 p_k)\over p_{k+1}} ~<~ \log^2 p_k - \log p_k - 1,
$$ 
which is equivalent to
$$
(p_{k+1}-p_k)(p_k+\log^2 p_k) ~<~ (p_k + (p_{k+1}-p_k))(\log^2 p_k - \log p_k - 1),
$$
$$
p_{k+1}-p_k ~<~ {p_k\over p_k+\log p_k + 1}(\log^2 p_k - \log p_k - 1).
$$
This proves the theorem for every $k\ge133115$ because $p_k/(p_k+\log p_k + 1)<1$.
Separately, for $9<k<133115$ we verify the desired inequality by direct computation. 
\end{proof}

\section{Does a given prime gap confirm or disprove \\ Firoozbakht's conjecture?}\label{sec3}

Given $p_k$ and $p_{k+1}$, where the prime gap $p_{k+1}-p_k$ is ``large'' and
$k=\pi(p_k)$ is {\it not} known, can we decide whether this gap 
confirms or disproves Firoozbakht's conjecture? The answer is, 
in most cases, {\it yes}. We showed this in \cite[Sect.\,3]{kourbatov2015v} 
and established the following theorem:

\begin{theorem}\label{th2} {\rm(\cite[Sect.\,4]{kourbatov2015v}).}
Firoozbakht's conjecture $(\ref{ineqf})$ is true for all primes 
$p_k  < 4\times10^{18}$. \ 
\end{theorem}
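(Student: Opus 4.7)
The plan is to reduce the claim to a finite verification. Firoozbakht's conjecture (\ref{ineqf}) at index $k$ is equivalent to $p_{k+1} - p_k < p_k(p_k^{1/k}-1)$, whose right-hand side grows like $\log^2 p_k$ by the prime number theorem, while every prime gap below $4\times 10^{18}$ is bounded by a small known maximal gap (the largest being less than $1500$). So the inequality should hold with considerable slack for all but very small $k$, and the task is to carry out the corresponding finite check.

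First I would tabulate all maximal prime gaps $M_1 < M_2 < \cdots < M_N$ together with the primes $p_{k_1} < p_{k_2} < \cdots < p_{k_N}$ at which they occur, with $p_{k_N} < 4\times 10^{18}$; such tables are available in the literature (see \seqnum{A002386}, \seqnum{A005250}). For each $i$, I verify (\ref{ineqf}) at $k = k_i$ directly, by computing $p_{k_i}^{1+1/k_i}$ to sufficient precision and comparing it with $p_{k_i+1}$. To cover the primes $p_k$ lying strictly between two consecutive maximal-gap endpoints $p_{k_i}$ and $p_{k_{i+1}}$, I note that the gap there automatically satisfies $p_{k+1}-p_k \le M_i$. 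To conclude (\ref{ineqf}) throughout this interval, I would invoke a sufficient condition of the form $p_{k+1}-p_k < \log^2 p_k - \log p_k - b$ (of the type developed in Section \ref{sec4}), whose right-hand side depends only on $p_k$ and is monotonically increasing in $p_k$. Verifying this stronger inequality at $p_k = p_{k_i}$ with an appropriate $b$ then covers every $p_k \in (p_{k_i}, p_{k_{i+1}})$ automatically, since in that subinterval the gap is no larger while the bound is larger.

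The main obstacle is bookkeeping rather than number theory. Care is needed to (a) perform the comparison $p_{k+1} < p_k^{1+1/k}$ in high enough precision when $p_k$ is near $4\times 10^{18}$ and $1/k$ is correspondingly tiny, and (b) check that the sufficient condition actually holds at every maximal-gap position with the chosen constant $b$; a handful of small cases near the beginning, where the bound with small $b$ is too tight, would have to be handled by direct verification of (\ref{ineqf}) instead. Once both points are addressed, only a few hundred maximal gaps must be processed, which is well within routine computational reach; the details are carried out in \cite{kourbatov2015v}.
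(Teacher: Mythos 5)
Your proposal is correct and coincides with the strategy the paper itself sketches in Remark (i) of Section~\ref{sec4}: verify (\ref{ineqf}) directly for the small primes, then check the sufficient condition (\ref{in10}) only at the maximal-gap primes $p_k=\seqnum{A002386}(n)$, using the monotonicity of its right-hand side (together with the pointwise implication established in Theorem~\ref{th3}) to cover all intermediate primes. The only cosmetic difference is that you also verify (\ref{ineqf}) directly at each maximal-gap prime, which becomes redundant once (\ref{in10}) is confirmed there.
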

In the verification of (\ref{ineqf}) for $p_k  < 4\times10^{18}$
we have {\it not} used bound (\ref{ineqln}) or (\ref{in2a}); see \cite{kourbatov2015v}.
Indeed, (\ref{ineqln}) is a corollary of (\ref{ineqf}); as such,
(\ref{ineqln}) might be true even when (\ref{ineqf}) is false.
Here is a more detailed discussion.  Define (see Table 1):
\begin{eqnarray*}
f_k &=& p_k^{1+1/k}- p_k \mspace{47mu}\mbox{(the upper bound for $p_{k+1}-p_k$ predicted by (\ref{ineqf}));} \\
\ell_k &=& \log^2 p_k - \log p_k    \quad\mbox{(the upper bound for $p_{k+1}-p_k$ predicted by (\ref{ineqln})).}
\end{eqnarray*}
One can prove that $f_k<\ell_k$ when $k\to\infty$; moreover, $f_k=\ell_k-1+o(1)$ (see {\em Appendix}).
Computation shows that $f_k<\ell_k$ for $p_k\ge11783$ \ ($k\ge1412$).
Suppose there is a prime $q \in [p_k+f_k,\ p_k+\ell_k]$; for example, 
there is such a prime, $q=2010929$, when $p_k=2010733$ (see line 7 in Table 1).
Now what if there were no other primes between $p_k$ and $q$?
Then we would have $p_{k+1}=q$, Firoozbakht's conjecture (\ref{ineqf}) would be {\it false}, 
while (\ref{ineqln}) would still be {\it true}.
So (\ref{ineqln}) is not particularly useful for {\it verifying} (\ref{ineqf}).
On the other hand, any violation of (\ref{ineqln}) would immediately disprove
Firoozbakht's conjecture (\ref{ineqf}). Clearly, similar reasoning is valid for (\ref{in2a}) with $b\le1$.
However, in the next section we prove that (\ref{in2a}) with $b=1.17$ is a {\it sufficient condition} for 
Firoozbakht's conjecture (\ref{ineqf}). We will also give a few other sufficient conditions
that have the form (\ref{in2a}) with $b\to1$ as $k\to\infty$.

\begin{center}
\small{
\begin{tabular}{rrrrr}
\hline
  $\large{\vphantom{1^{1^1}}} k$ & \raisebox{.3mm}{$p_k$} & \ \ \raisebox{.3mm}{$p_{k+1}-p_k$}
  & \ \ $f_k=p_k^{1+1/k}\negthinspace-p_k$ & \ $\ell_k = \log^2 p_k - \log p_k $  \\
[0.5ex]\hline
\vphantom{\fbox{$1^1$}}
              6 &               13 &    4 \phantom{1}&    6.934 \phantom{111}&    4.014 \phantom{1111}\\
              9 &               23 &    6 \phantom{1}&    9.586 \phantom{111}&    6.696 \phantom{1111}\\ 
             30 &              113 &   14 \phantom{1}&   19.286 \phantom{111}&   17.621 \phantom{1111}\\ 
            217 &             1327 &   34 \phantom{1}&   44.709 \phantom{111}&   44.515 \phantom{1111}\\ 
           3385 &            31397 &   72 \phantom{1}&   96.188 \phantom{111}&   96.861 \phantom{1111}\\ 
          31545 &           370261 &  112 \phantom{1}&  150.529 \phantom{111}&  151.581 \phantom{1111}\\ 
         149689 &          2010733 &  148 \phantom{1}&  194.972 \phantom{111}&  196.142 \phantom{1111}\\ 
        1319945 &         20831323 &  210 \phantom{1}&  265.959 \phantom{111}&  267.137 \phantom{1111}\\  
     1094330259 &      25056082087 &  456 \phantom{1}&  548.237 \phantom{111}&  549.389 \phantom{1111}\\ 
    94906079600 &    2614941710599 &  652 \phantom{1}&  787.801 \phantom{111}&  788.925 \phantom{1111}\\  
   662221289043 &   19581334192423 &  766 \phantom{1}&  904.982 \phantom{111}&  906.097 \phantom{1111}\\  
  6822667965940 &  218209405436543 &  906 \phantom{1}& 1055.966 \phantom{111}& 1057.071 \phantom{1111}\\  
 49749629143526 & 1693182318746371 & 1132 \phantom{1}& 1193.418 \phantom{111}& 1194.516 \phantom{1111}\\   
\hline
\end{tabular}
Table 1: Upper bounds for prime gaps
$p_{k+1}-p_k{\vphantom{1^{1^{1^1}}}}$ predicted by (1) and (2); \ \ 
$p_k \in \mbox{\seqnum{A111943}}$ \cite{oeis} \\ [0.5em]
}
\end{center}

\section{Sufficient conditions for Firoozbakht's conjecture}\label{sec4}

\begin{theorem}\label{th3}
If
\begin{equation}\label{in10}
p_{k+1}-p_k  ~<~ \log^2 p_k - \log p_k - 1.17  \quad \mbox { for all } k>9 \ \ (p_k\ge29), 
\end{equation}
then Firoozbakht's conjecture $(\ref{ineqf})$ is true.
\end{theorem}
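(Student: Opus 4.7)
The plan is to run the argument of Theorem~\ref{th1} in reverse. Rather than starting from Firoozbakht's conjecture and deriving a gap bound via a \emph{lower} bound on $\pi(x)$, I would start from the hypothesis (\ref{in10}) and derive (\ref{ineqf}) via an \emph{upper} bound on $\pi(x)$, combined with the elementary inequality $\log(1+u)<u$ in place of the opposite inequality (\ref{in9}) used in Theorem~\ref{th1}.

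The entire argument hinges on the algebraic identity
\[
\log^2 p_k - \log p_k - 1.17 \;=\; \log p_k \cdot \Bigl(\log p_k - 1 - \tfrac{1.17}{\log p_k}\Bigr),
\]
which matches the right-hand side of (\ref{in10}) to the denominator of a standard explicit upper bound for $\pi(x)$. Concretely, I would invoke a bound of the form $\pi(x) < x/(\log x - 1 - 1/\log x)$ valid for all $x \geq x_0$ (Dusart, or the companion upper-bound corollary in Axler). Since $\log x - 1 - 1/\log x > \log x - 1 - 1.17/\log x$ throughout the relevant range, this automatically yields the slightly weaker form $\pi(x) < x/(\log x - 1 - 1.17/\log x)$, and combining it with the hypothesis (\ref{in10}) written via the identity above gives, in a single cross-multiplication,
\[
k(p_{k+1}-p_k) \;<\; p_k \log p_k \qquad \text{for } p_k \ge x_0.
\]

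To finish, I would apply $\log(1+u)<u$ with $u=(p_{k+1}-p_k)/p_k$:
\[
k(\log p_{k+1} - \log p_k) \;=\; k\log\!\Bigl(1+\tfrac{p_{k+1}-p_k}{p_k}\Bigr) \;<\; \frac{k(p_{k+1}-p_k)}{p_k} \;<\; \log p_k,
\]
which is precisely the equivalent form (\ref{in6}) of Firoozbakht's conjecture. The remaining values $9<k$ with $p_k<x_0$ are either verified by direct computation or, more cheaply, absorbed into Theorem~\ref{th2}, which already settles the range $p_k<4\times10^{18}$ unconditionally. The only technical care point is locating an explicit PNT upper bound with a small enough $x_0$ to keep the residual finite check tractable; the deliberate slack of $0.17$ in the $1/\log p_k$ term of the hypothesis (versus the constant $1$ appearing in Theorem~\ref{th1}) is precisely the cushion that absorbs the gap between the sharpest available explicit bound on $\pi(x)$ and the weaker form needed here, and is what turns a necessary condition into a sufficient one.
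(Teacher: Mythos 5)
Your overall strategy is exactly the paper's: factor the hypothesis as $\log^2 p_k-\log p_k-1.17=\log p_k\,\bigl(\log p_k-1-\tfrac{1.17}{\log p_k}\bigr)$, combine it with an explicit upper bound for $\pi(x)$ to get $k(p_{k+1}-p_k)<p_k\log p_k$, finish with $\log(1+u)<u$ to reach the equivalent form (\ref{in6}), and dispose of small $p_k$ by the unconditional verification. That matches the paper's proof of Theorem \ref{th3} step for step.

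However, there is one genuine error in how you propose to source the key ingredient. The bound you want to invoke, $\pi(x)<x/(\log x-1-1/\log x)$ for all $x\ge x_0$, does not exist and cannot exist: by Panaitopol's expansion $x/\pi(x)=\log x-1-\tfrac{1}{\log x}-\tfrac{3}{\log^2 x}-\cdots$, the function $\pi(x)$ eventually \emph{exceeds} $x/(\log x-1-1/\log x)$. Indeed, the paper's own inequality (\ref{in5}) (Axler, Corollary 3.6) asserts
\[
\frac{x}{\log x-1-\frac{1}{\log x}-\frac{1}{\log^2 x}} \;<\; \pi(x) \qquad\text{for } x\ge 1772201,
\]
which already forces $\pi(x)>x/(\log x-1-1/\log x)$ throughout that range. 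So the ``stronger'' bound from which you deduce the $1.17$ version is false, and your closing interpretation of the $0.17$ as optional cushion over a true constant-$1$ bound is backwards: a first-order upper bound for $\pi(x)$ of this shape is only true because the coefficient exceeds $1$ (it must absorb the genuine $3/\log^2 x$ term), which is the entire reason Theorem \ref{th3} needs $1.17$ while Theorem \ref{th1} delivers $1$. The repair is immediate and is what the paper does: cite Axler's Corollary 3.5 directly, which gives $\log x-1-\tfrac{1.17}{\log x}<x/\pi(x)$ for every $x\ge 5.43$ (inequality (\ref{in11})); with that single substitution the rest of your argument goes through verbatim, and since the bound holds from $x\ge 5.43$ there is no residual finite check beyond the small primes where (\ref{ineqf}) is already known.
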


\begin{proof}
From Axler \cite[Corollary 3.5]{axler} (see {\it Corrigendum} \ref{corrig}) we have
\begin{equation}\label{in11}
\log x - 1 - {1.17\over\log x} ~<~ {x\over\pi(x)}
\quad\mbox{ for every } x\ge 2634800823.     
\end{equation}
Multiplying both sides of (\ref{in11}) by $\log x$, taking $x=p_k$, 
and using (\ref{in10}), we get
\begin{equation}\label{in12}
p_{k+1}-p_k ~<~ \log^2 p_k - \log p_k - 1.17 ~<~ {p_k\log p_k\over k};
\end{equation}
therefore,
\begin{equation}\label{in13}
{p_{k+1}-p_k \over p_k} ~<~ {\log p_k\over k}. 
\end{equation}
We have
$$
\log(x+y)-\log x ~<~ {y \over x}
\qquad\mbox{ for every } x,y>0.
$$
Setting $x=p_k$ and $y=p_{k+1}-p_k$, we can replace the left-hand side of (\ref{in13}) 
by a smaller quantity $\log p_{k+1}-\log p_k$ to obtain the inequality
\begin{equation}\label{in14}
\log p_{k+1}-\log p_k  ~<~ {\log p_k\over k},  
\end{equation}
which is equivalent to
$$
\log_{p_k} {p_{k+1}\over p_k} ~<~ {1\over k}.
$$
Now, exponentiation with base $p_k$ yields (\ref{ineqf}) for $p_k\ge 2634800823$.
This completes the proof since for $p_k\in[29,2634800823]$ both (\ref{ineqf}) and (\ref{in10})
hold unconditionally. 
\end{proof}

\smallskip\noindent
{\bf Other sufficient conditions for (\ref{ineqf}). }
Based on the $\pi(x)$ formula of Panaitopol \cite{panaitopol}, 
Axler gives a family of upper bounds for $\pi(x)$ \cite[Corollary 3.5]{axler}:
\begin{eqnarray*}
\pi(x) &<& {x\over\log x - 1 - {1.17\over\log x}}
     \mspace{190mu} \mbox{ for } x\ge2634800823, \\
\pi(x) &<& {x\over\log x - 1 - {1\over\log x} - {3.83\over\log^2 x}}
     \mspace{133mu} \mbox{ for } x\ge9.25, \\
\pi(x) &<& {x\over\log x - 1 - {1\over\log x} - {3.35\over\log^2 x} - {15.43\over\log^3 x}}
     \mspace{75mu}  \mbox{ for } x\ge14.36, \\
\pi(x) &<& {x\over\log x - 1 - {1\over\log x} - {3.35\over\log^2 x} - {12.65\over\log^3 x} - {89.6\over\log^4 x}}
     \quad \mbox{ for } x\ge21.95. \\
\end{eqnarray*}
Just as in Theorem \ref{th3}, we can transform the above upper bounds into 
sufficient conditions for Firoozbakht's conjecture (\ref{ineqf}) and obtain our next theorem. 
\begin{theorem}\label{th4} 
If one or more of the following conditions hold for all $p_k>4\times10^{18}:$
\begin{eqnarray*}
p_{k+1}-p_k &<& \log^2 p_k - \log p_k - 1.17,  \\
p_{k+1}-p_k &<& \log^2 p_k - \log p_k - 1 - {3.83\over\log p_k}, \phantom{1^{1^{1^1}}\over1} \\
p_{k+1}-p_k &<& \log^2 p_k - \log p_k - 1 - {3.35\over\log p_k} - {15.43\over\log^2 p_k},    \\
p_{k+1}-p_k &<& \log^2 p_k - \log p_k - 1 - {3.35\over\log p_k} - {12.65\over\log^2 p_k} - {89.6\over\log^3 p_k}, 
\end{eqnarray*}
then Firoozbakht's conjecture $(\ref{ineqf})$ is true.
\end{theorem}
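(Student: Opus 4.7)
The plan is to repeat the argument of Theorem \ref{th3} verbatim for each of the four Axler upper bounds $\pi(x)<x/g(x)$ listed immediately before the statement. First I would observe that Theorem \ref{th2} already establishes $(\ref{ineqf})$ unconditionally for $p_k<4\times10^{18}$, so it suffices to treat the range $p_k>4\times10^{18}$ under any one of the four hypotheses; this also explains why the cutoff is chosen so far above Axler's thresholds $5.43$, $9.25$, $14.36$, $21.95$, which guarantees that whichever of Axler's bounds is used is valid throughout.

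Now fix one of Axler's bounds $\pi(x)<x/g(x)$ together with its matching hypothesis in the theorem. Inverting and setting $x=p_k$ gives $g(p_k)<p_k/\pi(p_k)=p_k/k$, and multiplying both sides by $\log p_k$ yields
$$
\log p_k \cdot g(p_k) ~<~ {p_k \log p_k \over k}.
$$
A routine computation shows that $\log p_k \cdot g(p_k)$ is precisely the right-hand side of the corresponding inequality in the theorem statement: for instance, when $g(x)=\log x - 1 - 1/\log x - 3.83/\log^2 x$, one has $\log p_k \cdot g(p_k) = \log^2 p_k - \log p_k - 1 - 3.83/\log p_k$, and the remaining two cases work in exactly the same way.

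Combining this identity with the assumed inequality gives $p_{k+1}-p_k<p_k\log p_k/k$, hence $(p_{k+1}-p_k)/p_k<\log p_k/k$. As in the proof of Theorem \ref{th3}, I then apply the elementary bound $\log(x+y)-\log x<y/x$ with $x=p_k$ and $y=p_{k+1}-p_k$ to obtain $\log p_{k+1}-\log p_k<\log p_k/k$, which rearranges to $k\log p_{k+1}<(k+1)\log p_k$ and, upon exponentiating with base $e$, to Firoozbakht's conjecture $(\ref{ineqf})$.

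There is no real obstacle here: the proof is Theorem \ref{th3} applied four times, and the only per-case verification is the algebraic identity $\log x \cdot g(x) = \log^2 x - \log x - 1 - (\text{correction terms})$, which follows immediately by expanding. If I had to name a delicate point, it would be the bookkeeping needed to confirm that each $g(x)$ matches its claimed $\log x \cdot g(x)$ expansion with the exact constants $3.83$, $3.35$, $15.43$, $12.65$, $89.6$ appearing in the theorem — but this is mechanical.
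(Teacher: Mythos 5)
Your proposal is correct and matches the paper's intent exactly: the paper omits the proof of Theorem~\ref{th4}, stating only that it is ``fully similar to the proof of Theorem~\ref{th3},'' and your argument is precisely that proof carried out for each of the four Axler bounds, with the algebraic identifications $\log x\cdot g(x)=\log^2 x-\log x-1-(\text{correction terms})$ and the handling of $p_k<4\times10^{18}$ via Theorem~\ref{th2} both done correctly.
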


In the statement of Theorem \ref{th4}, we have taken into account that for $p_k<4\times10^{18}$
conjecture $(\ref{ineqf})$ holds unconditionally \cite{kourbatov2015v}.
We do not give a proof of Theorem \ref{th4}; it is fully similar to the proof of Theorem \ref{th3}.

\medskip\noindent
{\bf Remarks.} 

(i) In inequality (\ref{in10}) the right-hand side is an increasing function of $p_k$.
Therefore, if (\ref{in10}) holds for a {\it maximal prime gap} with
$p_k = \mbox{\seqnum{A002386}}(n)$, then (\ref{in10}) must also be true 
for {\em every} $p_k$ between $\mbox{\seqnum{A002386}}(n)$ and $\mbox{\seqnum{A002386}}(n+1)$.
So an easy way to prove Theorem \ref{th2} is to 
check (\ref{ineqf}) directly for all primes $p_k\le89$, then verify (\ref{in10}) 
just for maximal prime gaps with $p_k=\mbox{\seqnum{A002386}}(n)\ge89$. 

(ii) In Theorem \ref{th4}, the coefficients of $(\log p_k)^{-n}$ approximate  
the terms of OEIS sequence \seqnum{A233824}: a recurrent sequence in Panaitopol's formula for $\pi(x)$
\cite{panaitopol}.

\section{Appendix: An asymptotic formula for $p_k^{1+1/k}-p_k$} 

\begin{theorem}\label{th5} 
Let $p_k$ be the $k$-th prime, and let $f_k = p_k^{1+1/k}- p_k$, then 
$$
f_k = \log^2 p_k - \log p_k - 1 +o(1) \quad\mbox{ as } k\to\infty 
\qquad\mbox{\rm(cf.~OEIS \seqnum{A246778}).}
$$
\end{theorem}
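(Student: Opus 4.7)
The plan is to reduce the claim to a sufficiently sharp form of the prime number theorem. Write
$$f_k = p_k\bigl(p_k^{1/k}-1\bigr) = p_k\bigl(e^{u}-1\bigr), \qquad u := \frac{\log p_k}{k},$$
and expand the exponential: $f_k = p_k u + p_k u^2/2 + p_k u^3/6 + \cdots$. The whole problem is therefore to pin down $p_k u = \log(p_k)\cdot(p_k/k)$ with an error $o(1)$, and to check that the tail of the exponential series contributes $o(1)$.

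For the first task I would invoke an asymptotic of the form $p_k/k = \log p_k - 1 - 1/\log p_k + O(1/\log^2 p_k)$, which is equivalent to the sharp PNT bound on $\pi(x)$ already cited in the paper from Axler (Corollary 3.6) and its matching lower bound, i.e.\
$$\pi(x) = \frac{x}{\log x - 1 - \tfrac{1}{\log x} + O(1/\log^2 x)}.$$
Setting $x=p_k$ and multiplying by $\log p_k$ then gives
$$p_k u = \frac{p_k \log p_k}{k} = \log^2 p_k - \log p_k - 1 + O(1/\log p_k).$$
This supplies exactly the desired constant $-1$.

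For the second task I would show $u = O(\log^2 p_k / p_k)$ from the estimate above, so that
$$p_k u^2/2 = O(\log^4 p_k / p_k) = o(1),$$
and likewise every subsequent term $p_k u^n/n!$ is smaller by a factor $u = o(1)$. Adding the estimates gives $f_k = \log^2 p_k - \log p_k - 1 + o(1)$, as claimed.

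The only mildly delicate point is invoking a PNT estimate that is sharp enough to isolate the $-1$ constant; the weaker form $p_k \sim k\log k$ only recovers the leading $\log^2 p_k$ term, and the next-order form $p_k/k = \log p_k - 1 + o(1)$ only recovers $\log^2 p_k - \log p_k + o(1)$. One must go to the $1/\log x$ correction in the Panaitopol/Axler expansion, as above, to obtain the constant $-1$; after that, the rest of the argument is routine Taylor expansion.
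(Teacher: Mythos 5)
Your proof is correct, and it takes a genuinely different route from the paper's. You write $f_k=p_k(e^u-1)$ with $u=\log p_k/k$, Taylor-expand the exponential, and feed in the second-order asymptotic $p_k/k=\log p_k-1-\tfrac{1}{\log p_k}+O(1/\log^2 p_k)$; the main term $p_ku$ produces the constant $-1$ and the quadratic and higher terms are $O(\log^4 p_k/p_k)=o(1)$. All of these steps check out, and you correctly identify that the $-1/\log x$ correction in the Panaitopol/Axler expansion of $x/\pi(x)$ is the essential input (two-sided, so you need both Axler's Corollary 3.5 and Corollary 3.6, not 3.6 alone as your phrasing slightly suggests). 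The paper instead never expands the exponential: it uses the identity $k=\log p_k/(\log(p_k+f_k)-\log p_k)$ to sandwich $f_k$ directly, chaining Axler's two explicit bounds with the elementary inequalities $\tfrac{y}{x+y}<\log(x+y)-\log x<\tfrac{y}{x}$ and a hand-checked auxiliary inequality, which yields the fully explicit estimates $\log^2 p_k-\log p_k-1-\tfrac{3.83}{\log p_k}<f_k<\log^2 p_k-\log p_k-1$ for all $p_k\ge1772201$. The trade-off: your argument is shorter and more transparent for the pure $o(1)$ statement, while the paper's version buys effective constants and an explicit range, in particular the one-sided bound $f_k<\log^2 p_k-\log p_k-1$ that the paper exploits in Section~3.
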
 

\begin{proof}
From Axler \cite[Corollaries 3.5, 3.6]{axler} we have
\begin{equation}\label{dblin}
{x\over\log x-1-{1\over\log x}-{1\over\log^2 x}} ~<~ \pi(x) ~<~  
{x\over\log x-1-{1\over\log x}-{3.83\over\log^2 x}} \quad\mbox{ for } x\ge1772201.
\end{equation}  
By definition of $f_k$, we have $\log_{p_k}(p_k+f_k) = 1+1/k$, so 
$\displaystyle
k = \pi(p_k) = {\log p_k \over \log(p_k+f_k)-\log p_k}.
$ 
Therefore, for $x=p_k\ge1772201$, we can rewrite (\ref{dblin}) as
\begin{equation}\label{dblin2}
{p_k\over\log p_k-1-{1\over\log p_k}-{1\over\log^2 p_k}} ~<~ {\log p_k \over \log(p_k+f_k)-\log p_k} ~<~  
{p_k\over\log p_k-1-{1\over\log p_k}-{3.83\over\log^2 p_k}}.
\end{equation}
{\em An upper bound for} $f_k$.
We combine (\ref{in4}) with the left inequality of (\ref{dblin2}) to get
\begin{equation}\label{inA}
{p_k+\log^2 p_k \over \log p_k - 1 -{1\over\log p_k}} ~<~ {\log p_k \over \log(p_k + f_k)-\log p_k} 
\quad\mbox{ for }p_k\ge1772201.
\end{equation}
Cross-multiplying and using (\ref{in9}), similar to Theorem \ref{th1}, we obtain
$$
{f_k(p_k+\log^2 p_k)\over p_k + f_k} ~<~ {(\log(p_k + f_k)-\log p_k)(p_k+\log^2 p_k)} ~<~ \log^2 p_k - \log p_k - 1,
$$
$$
f_k(p_k+\log^2 p_k) ~<~ (p_k + f_k)(\log^2 p_k - \log p_k - 1),
$$
$$
f_k ~<~ {p_k\over p_k+\log p_k + 1}(\log^2 p_k - \log p_k - 1)
    ~<~ \log^2 p_k - \log p_k - 1.
$$
{\em A lower bound for} $f_k$.
From the right inequality of (\ref{dblin2}) we get
$$
{\log^2 p_k - \log p_k - 1 - {3.83\over\log p_k}\over p_k}
~<~ \log(p_k + f_k)-\log p_k ~<~ {f_k\over p_k},
$$
$$
\log^2 p_k - \log p_k - 1 - {3.83\over\log p_k} ~<~ f_k.
$$
Together, the upper and lower bounds yield the desired asymptotic formula for $k\to\infty$.
\end{proof}

\section{Acknowledgments}
The author expresses his gratitude to the anonymous referee for numerous useful suggestions,
and to all contributors and editors 
of the websites {\it OEIS.org} and {\it PrimePuzzles.net}, particularly to Farideh Firoozbakht 
for proposing a very interesting conjecture. 
Thanks are also due to Christian Axler for proving the $\pi(x)$ bounds \cite{axler}
used in Theorems \ref{th1}, \ref{th3}--\ref{th5}.

\pagebreak

{\small

\bigskip
\hrule
\bigskip

\noindent 2010 {\it Mathematics Subject Classification}: 11N05.

\noindent \emph{Keywords: } 
Cram\'er conjecture, Firoozbakht conjecture, prime gap.

\bigskip
\hrule
\bigskip

\noindent (Concerned with sequences 
 \seqnum{A002386},
 \seqnum{A005250},
 \seqnum{A111943},
 \seqnum{A182134},
 \seqnum{A182514},
 \seqnum{A182519},
 \seqnum{A205827},
 \seqnum{A233824},
 \seqnum{A235402},
 \seqnum{A235492},
 \seqnum{A245396},
 \seqnum{A246776},
 \seqnum{A246777},
 \seqnum{A246778},
 \seqnum{A246810},
 \seqnum{A249669}.)

\bigskip
\hrule
\bigskip
}

\section{Corrigendum}\label{corrig}
The proof of Theorem \ref{th3} [K], as well as subsequent discussion, 
should reflect the true range of applicability of (\ref{in11}), necessitating
the following changes (see [A]):

\medskip\noindent
In inequality (\ref{in11}), replace ``$x\ge5.43$'' with ``$x\ge 2634800823$''

\medskip\noindent
Remove ``Let $k>9$.'' after inequality (\ref{in11}). 

\medskip\noindent
In inequalities (\ref{in12}) and (\ref{in13}), remove ``for $p_k\ge29$''.

\medskip\noindent
Replace the last two sentences of the proof of Theorem \ref{th3} with

\smallskip\noindent{\footnotesize
Now, exponentiation with base $p_k$ yields (\ref{ineqf}) for $p_k\ge 2634800823$.
This completes the proof since for $p_k\in[29,2634800823]$ both (\ref{ineqf}) and (\ref{in10})
hold unconditionally. 
}

\medskip\noindent
In the 2nd display formula on p.\,5, replace ``$x\ge5.43$'' with ``$x\ge 2634800823$''

\medskip\noindent
{\it These changes have been incorporated in\,} arXiv:1506.03042v4.

\medskip\noindent
{\bf References.}

\smallskip\noindent
[K] A.~Kourbatov, Upper bounds for prime gaps related to Firoozbakht's conjecture,
{\it Journal of Integer Sequences}, {\bf 18} (2015), Article 15.11.2. 

\smallskip\noindent
[A] C.~Axler, Corrigendum to ``New  bounds for the prime counting function'',
{\it Integers} {\bf 16} (2016), A22, 15 pp. 
\url{http://math.colgate.edu/~integers/vol16.html}

\end{document}